\theoremstyle{plain} 
\newtheorem{theorem}{\indent\sc Theorem}[section]
\newtheorem{corollary}[theorem]{\indent\sc Corollary}
\newtheorem{proposition}[theorem]{\indent\sc Proposition}
\theoremstyle{definition} 
\newtheorem{definition}[theorem]{\indent\sc Definition}
\newtheorem{remark}[theorem]{\indent\sc Remark}
\title{\uppercase {On the Prolongations of Representations of Lie Groups}}
\author{Hulya Kadioglu\\Department of Mathematics, Gazi University, Turkey\\and Department of Mathematics, Idaho State University, USA\\emails: hulyakaya@gazi.edu.tr and kayahuly@isu.edu\\Erdogan Esin \\Department of Mathematics, Gazi University, Turkey.\\email: eresin@gazi.edu.tr 
}
\date{}
\begin{document}
   \maketitle

   \begin{abstract}
\noindent In this paper, we introduce a study of  prolongations of representations of Lie 
groups. We obtain a faithful ({\em one-to-one}) representation of $TG$ where $G$
  is a finite-dimensional Lie group and $TG$ is the tangent bundle of  $G$, by 
using (not necessarily faithful) representations  of $G$. We show that tangent
 functions of  Lie group actions correspond to prolonged representations. We 
prove that if two representations are equivalent, then their prolongations are 
equivalent too. We show that if $U$ is an invariant subspace for a 
representation, then $TU$ is an invariant subspace for the prolongation of
the  given representation and vice versa. We prove that if $\widetilde{\Phi}$
 is an irreducible representation, then $\Phi$ is also an irreducible 
representation. Finally we show that prolongations of direct sum of two 
representations are direct sum of their  prolongations.
  \newline
  
  \noindent \textbf{Keywords:} Prolongation; Lie Group; Representation on Lie Group; Tangent Bundle. 

    \end{abstract}

\section {Introduction}

\noindent In this paper, we present a study of  prolongations of representations of Lie
 groups. A representation denoted by $(G,V)$ on a Lie group $G$ can be defined 
as a homomorphism from $G$ to a automorphism group of a finite-dimensional 
vector space $V$.  
\newline

\noindent It is known that if $\Phi:G\to Aut(V)$ is a {\em one-to-one} (faithful) 
representation, then $\Phi(G)$ is a matrix
group  and isomorphic to the original group $G$ \cite{Hall}. This helps us 
to represent $G$ as a matrix Lie group \cite{Hall}. The main benefit of such representations 
is that it is 
easier to execute algebraic operations/computations on a matrix Lie group 
than a standard Lie group. 
In general, a prolongation prolongs structures of  manifolds to their bundles.
 In this paper, we are interested in studying prolongations of representations
 of Lie groups to their tangent bundles.
 \newline

\noindent In 1966, K. Yano and S.Kobayashi proposed the following question:
 Is it possible to associate each $G$-structure on a smooth manifold $M$ with a naturally
 induced $G'$-structure on the tangent bundle $TM$ where $G'$ is a certain subgroup of 
$GL(2n,\mathbb{R})$ \cite{Yano1}? In 1968, Morimoto gave an
 answer to this question \cite{Morimoto:1968}. In \cite{Morimoto:1968}, first a vector
 space structure with a new sum and scalar product on $\ T\mathbb{R}^n$ 
 (the tangent bundle of $\mathbb{R}^n$)  is introduced and generalized to an 
arbitrary finite-dimensional vector space V \cite{Morimoto:1968}. Then
 $T(GL(n,\mathbb{R}))$ was embedded
 into $GL(2n,\mathbb{R})$ by using the following Lie group homomorphism  
   \begin{equation*}
    J_n:T(GL(n,\mathbb{R}))\to GL(2n,\mathbb{R}).
   \end{equation*}
 Using this Lie group homomorphism, Morimoto finds an association between each $G$-structure on a smooth manifold $M$ and a naturally
 induced $J_n(TG)$-structure on $TM$. Although our research is not direct result of Morimoto's study, his work provided us important insights such as using the vector space structure on $\ T\mathbb{R}^n$ and the Lie group homomorphism $J_n$ leaded us to following important findings: We show that the bundle trivialization of $TV$ is a linear
 isomorphism from $TV$ to $V\times \mathbb{R}^n$. Then using this isomorphism, we 
obtain a basis for $TV$. Based on these, we define the following  new 
{\em one-to-one} representation 
    \begin{equation*}
    \widetilde{\Phi}:TG\to Aut(TV), 
    \end{equation*} 
where $\Phi:G \to Aut(V)$ is a finite-dimensional real representation  and
 $\widetilde{\Phi}$ is a prolongation of the
 representation $\Phi$ to the tangent bundle $TG$ (refer to 
Section \ref{original}). In addition, we show that if $\rho$ is a group action 
corresponding to the representation $\Phi$, then $T\rho$ is a group  action
 that corresponds to the prolonged representation $\widetilde{\Phi}$. 
Using these, we 
prove that if two representations are equivalent, then their prolongations are
 also equivalent. We show that if $U$ is an invariant subspace for $\Phi$, 
then $TU$ is an invariant subspace for $\widetilde{\Phi}$ and vice versa. We prove
 that if $\widetilde{\Phi}$ is an irreducible representation, then $\Phi$ is 
also an irreducible representation. We note that if  $\Phi$ is an irreducible 
representation, then $\widetilde{\Phi}$ is not necessarily an irreducible 
representation (refer to Section \ref{original}). Finally, we show that 
prolongations of direct sum of two representations are direct sum of
 their prolongations.




\section{Background} 
In this section, we present the following basic definitions and theorems that will be used in Section \ref{original}.\\
\begin{theorem}\label{teoteo1}
For manifolds $\ M$ and $\ N$, $\ T(M)\times T(N)$ is equivalent to $\ T(M\times N)$ by using the following relation 
    \begin{equation}
    \begin{gathered}
    \numberwithin{equation}{section}
     (X,Y)\cong Tf_x(Y)+T\bar{f}_y(X) 
    \end{gathered}
    \end{equation}

for all $X\in T_x(M)$ and $Y\in T_y(M)$, where $f_x:N\to M\times N$ and $\bar{f}_y:M\to M\times N$ defined by $\ f_x(m)=(x,m)$ and 
$\bar{f}_y(m)=(m,y)$, where $T_x(M)$ represents the tangent space of $M$ at $\ x\in{M}$. 
\end{theorem}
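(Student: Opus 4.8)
The plan is to produce an explicit fiberwise linear isomorphism between $T_{(x,y)}(M\times N)$ and $T_x(M)\oplus T_y(N)$, and then to let the base point vary so that these pointwise isomorphisms assemble into a bundle equivalence. Let $\pi_M\colon M\times N\to M$ and $\pi_N\colon M\times N\to N$ be the canonical projections, and recall the inclusions $f_x$ and $\bar f_y$ from the statement. I would define the candidate map
\[
\Psi\colon T_x(M)\oplus T_y(N)\to T_{(x,y)}(M\times N),\qquad \Psi(X,Y)=T\bar f_y(X)+Tf_x(Y),
\]
which is precisely the correspondence asserted in the theorem, together with the candidate inverse
\[
\Phi\colon T_{(x,y)}(M\times N)\to T_x(M)\oplus T_y(N),\qquad \Phi(Z)=\bigl(T\pi_M(Z),\,T\pi_N(Z)\bigr).
\]
Both are manifestly linear, since the tangent map of a smooth map is linear on each fiber.

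The key step is to exploit functoriality of $T$ (the chain rule) applied to the four composites relating projections and inclusions. Because $\pi_M\circ\bar f_y=\mathrm{id}_M$ and $\pi_N\circ f_x=\mathrm{id}_N$, while $\pi_M\circ f_x$ and $\pi_N\circ\bar f_y$ are constant maps, passing to tangent maps gives
\[
T\pi_M\circ T\bar f_y=\mathrm{id},\quad T\pi_N\circ Tf_x=\mathrm{id},\quad T\pi_M\circ Tf_x=0,\quad T\pi_N\circ T\bar f_y=0.
\]
Substituting these into $\Phi\circ\Psi$ collapses the cross terms and yields $\Phi(\Psi(X,Y))=(X,Y)$, so $\Phi\circ\Psi=\mathrm{id}$; in particular $\Psi$ is injective.

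To conclude, I would invoke a dimension count: a product chart $(U\times V,\varphi\times\psi)$ around $(x,y)$ shows $\dim T_{(x,y)}(M\times N)=\dim M+\dim N=\dim\bigl(T_x(M)\oplus T_y(N)\bigr)$, so the injective linear map $\Psi$ between equidimensional spaces is an isomorphism, and $\Phi$ is its two-sided inverse. Alternatively, and this is the cleanest way to see the reverse identity directly, I would compute in the product chart: there $T\bar f_y$ sends $\partial/\partial x^i$ to $\partial/\partial x^i$ and $Tf_x$ sends $\partial/\partial y^j$ to $\partial/\partial y^j$, so $\Psi$ carries the basis $\{\partial/\partial x^i,\partial/\partial y^j\}$ of the summands onto the standard coordinate basis of $T_{(x,y)}(M\times N)$. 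The main obstacle is exactly this reverse identity $\Psi\circ\Phi=\mathrm{id}$: unlike $\Phi\circ\Psi$, it is not forced by the four composition rules alone and genuinely requires either the dimension argument or the local-coordinate check. Finally, since in the product chart the assignment depends smoothly on the base point, the pointwise isomorphisms glue into a smooth bundle equivalence $T(M\times N)\cong T(M)\times T(N)$, completing the argument.
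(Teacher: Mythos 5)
Your argument is correct and complete. Note, however, that the paper itself offers no proof of this statement: it appears in the Background section as a standard fact about tangent bundles of products (essentially quoted from the literature, cf.\ the treatment in Morimoto's paper and the cited manifold texts), so there is no in-paper argument to compare yours against. Your route --- defining $\Psi(X,Y)=T\bar f_y(X)+Tf_x(Y)$ together with the candidate inverse $Z\mapsto(T\pi_M(Z),T\pi_N(Z))$, using the chain rule on the four composites $\pi_M\circ\bar f_y=\mathrm{id}$, $\pi_N\circ f_x=\mathrm{id}$, $\pi_M\circ f_x=\mathrm{const}$, $\pi_N\circ\bar f_y=\mathrm{const}$ to get $\Phi\circ\Psi=\mathrm{id}$, and then closing the loop by a dimension count or the coordinate-basis computation in a product chart --- is the standard proof, and you are right to flag that the reverse identity $\Psi\circ\Phi=\mathrm{id}$ is the only step not forced by pure functoriality. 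You also silently corrected a typo in the statement ($Y$ should lie in $T_y(N)$, not $T_y(M)$), which is the right reading. The only thing I would tighten is the final gluing remark: to claim a bundle (rather than merely fiberwise) equivalence you should note that the map $TM\times TN\to T(M\times N)$ is globally smooth because in induced charts it is given by the identity on coordinates, not just that it ``depends smoothly on the base point.''
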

\begin{definition}
If we consider a coordinate neighborhood$\ U$ in$\ M$ with a local coordinate system $\{x_1,x_2,...,x_n\}$, then we can canonically define a local coordinate system\\ $\{x_1,x_2,...,x_n,v_1,v_2,...,v_n\} $ on $ T(U)$, i.e., a tangent vector $\displaystyle\sum_{i=1}^n {v_i (\frac{\partial}{\partial x_i})_x}$ has the coordinates $\ (x_1,x_2,...,x_n,v_1,v_2,...,v_n)$ if the point $\ x\in{U}$ has the coordinates $ (x_1,x_2,...,x_n)$. This local coordinate system $\{x_1,x_2,...,x_n,v_1,v_2,...,v_n\}$ is called the induced local coordinate system on $\ T(U)$ by $\{x_1,x_2,...,x_n\}$ \cite{Clark,Saunders}.

\end{definition}

\begin{definition}\label{def1} 
If we consider two tangent vectors $X\in T_x(\mathbb{R}^n)$
 and $Y\in T_y(\mathbb{R}^n)$, then the tangent bundle $T(\mathbb{R}^n)$ is a
 vector space of dimension $2n$ with respect to the following sum "$\oplus $" 
and the scalar multiplication "$\bullet$"
\begin{equation}
\begin{gathered}
 X\oplus Y=(T\tau_y)X+(T\tau_x)Y, \\
 \lambda\bullet X=(T\sigma_{\lambda})X. 
\end{gathered}
\end{equation}
where $\tau_x$ represents a translation of $\mathbb{R}^n$ by $ x\in \mathbb{R}^n$ and $\sigma_{\lambda}$ represents a the scalar multiplication by $\lambda \in \mathbb{R}$.
\noindent For any finite-dimensional vector space $V$, the tangent bundle $T(V)$ 
becomes a vector space with respect to the similar sum and the scalar
 multiplication. If $\ f:\mathbb{R}^n\to \mathbb{R}^m$ is a linear map, $\ Tf:T(\mathbb{R}^n)\to T(\mathbb{R}^m)$ (differential of $f$) is also a linear map  \cite{Morimoto:1968}.
\end{definition}

\begin{definition}\label{def2} Let $R_a:GL(n)\to GL(n)$ be a right translation of $ GL(n)$ by $ a\in GL(n)$ where
 $R_a(y)=y.a$ for $y\in GL(n)$. 
Then $B=TR_{a^{-1}}(Y)$ is a tangent vector of $GL(n)$ at its unit element ($e \in GL(n)$), 
namely $B \in Lie(GL(n))$.\\

\noindent Conversely, for any pair $a\in GL(n)$ and $ B\in Lie(GL(n))$, there exists 
 
\noindent $Y\in T_a(GL(n))$ by $Y=TR_a(B)$.  $Y$ can also be written as $Y=[a,B]$ \cite{Morimoto:1968}.
\end{definition}

\begin{theorem} \label{teo3}
$\ T(GL(n,\mathbb{R}))$ can be embedded into $\ GL(2n,\mathbb{R})$ by the following {\em{one-to-one}} 
Lie group homomorphism 
  \begin{equation}
  \begin{gathered}
  j_n:T(GL(n,\mathbb{R}))\to GL(2n,\mathbb{R}),\\ 
  J_n([a,B])=\
 \begin{pmatrix}
a & 0\\
Ba & a
\end{pmatrix}.
\end{gathered}
\label{jn}
\end{equation}
for any $a\in GL(n)$ and $B\in Lie(GL(n))$. It can be shown that $\ J_n([a,0])=Ta$ \cite{Morimoto:1968}.
\end{theorem}

\begin{remark}\label{remark4}
  The matrix that corresponds to
 a linear operator $\ F\in Aut(V)$ with respect to a fixed basis,
 $\{\alpha_i : 1\le i\le n\}$ $\subset$ $V$, consists of  arrays of scalars $\ (F_{j}^{i})$ determined by   
    \begin{equation}
    \begin{gathered}
    F(\alpha_j)=\displaystyle\sum_{i=1}^n (F_{j}^{i})\alpha_i.
    \end{gathered}
    \label{rem24}
    \end{equation}
\cite{Bel}. Here, $ F_{j}^{i}$ represents $\ (i,j)^{th}$ entry of the matrix that corresponds to the linear map $F$. Using (\ref{rem24}),  we can define the following  group isomorphism 
    \begin{equation}
    \begin{gathered}
     Z:Aut(V)\to GL(n)\hspace{4mm}, Z(F)=(F_{j}^{i}).
     \end{gathered}
     \end{equation}
Moreover, $F$ can be written as $F=F_{j}^{i} \alpha_i \otimes \alpha_j$ where $\ F_{j}^{i}\in GL(n)$. 
Another group isomorphism $\breve{Z}$ from $Aut(TV)$ to  $GL(2n)$ can be defined similar fashion. 
\end{remark}

\begin{proposition}\label{prop5} Let $ G$ and $ G'$ be two Lie groups. If $\gamma:G\to G'$
 is a Lie group homomorphism, then $T\gamma:TG\to TG'$ is a {\em one-to-one} Lie group
 homomorphism. 
\end{proposition}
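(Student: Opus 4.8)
The plan is to build everything on the functoriality of the tangent operator $T$ and the product identification of Theorem~\ref{teoteo1}. First I would pin down the group structure that $TG$ carries: writing $m_G\colon G\times G\to G$ for the multiplication, the induced multiplication on $TG$ is $\mu_G=Tm_G\circ\kappa_G^{-1}$, where $\kappa_G\colon T(G\times G)\to TG\times TG$ is the canonical isomorphism furnished by Theorem~\ref{teoteo1} (inversion being treated the same way), and the identity element of $TG$ is the zero vector $0_e$ at the unit $e\in G$. The claim then breaks into three checks: that $T\gamma$ is smooth, that it respects these operations, and that it is one-to-one.

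Smoothness is immediate, since $\gamma$ is smooth and $T$ sends smooth maps to smooth maps. For the homomorphism property I would start from the defining identity $\gamma\circ m_G=m_{G'}\circ(\gamma\times\gamma)$ and apply $T$ to both sides; functoriality, $T(f\circ g)=Tf\circ Tg$, converts this into $T\gamma\circ Tm_G=Tm_{G'}\circ T(\gamma\times\gamma)$. Composing with $\kappa_G,\kappa_{G'}$ and using the naturality of the identification of Theorem~\ref{teoteo1} — that under $\kappa$ the map $T(\gamma\times\gamma)$ becomes $T\gamma\times T\gamma$, which one reads off from the explicit relation stated there — this rearranges into $T\gamma\circ\mu_G=\mu_{G'}\circ(T\gamma\times T\gamma)$, exactly the statement that $T\gamma$ preserves products; the analogous computation with inversion finishes the check, so $T\gamma$ is a smooth group homomorphism.

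The injectivity is the delicate step and the one I expect to be the main obstacle. Suppose $T\gamma(X)=T\gamma(X')$ with $X\in T_aG$ and $X'\in T_bG$. Since $T\gamma$ carries $T_aG$ into $T_{\gamma(a)}G'$, equality of the images forces $\gamma(a)=\gamma(b)$, after which $d\gamma_a(X)=d\gamma_b(X')$ lies in a single tangent space. Thus one-to-oneness reduces to two facts about $\gamma$: that the base map is injective (so $\gamma(a)=\gamma(b)$ yields $a=b$) and that each fiber differential $d\gamma_a$ is injective. The latter follows from injectivity of $d\gamma_e$ by translating with the left multiplications $L_a$ and $L_{\gamma(a)}$, whose differentials are isomorphisms and which intertwine $\gamma$ because $\gamma$ is a homomorphism; and $d\gamma_e$ is injective precisely when $Lie(\ker\gamma)=\ker d\gamma_e$ is trivial. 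The crux, therefore, is the triviality of $\ker\gamma$: the argument genuinely rests on $\gamma$ being one-to-one, since otherwise $T\gamma$ collapses the fibers over $\ker\gamma$. I would accordingly establish injectivity after isolating the hypothesis that $\gamma$ has trivial kernel, which is the situation for the embeddings and faithful representations to which the proposition is subsequently applied.
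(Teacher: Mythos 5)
The paper offers no proof of Proposition~\ref{prop5} at all: it is stated in the Background section as a known fact, so there is nothing internal to compare your argument against line by line. Judged on its own, your proof is sound. The homomorphism property via functoriality of $T$ applied to $\gamma\circ m_G=m_{G'}\circ(\gamma\times\gamma)$, together with the identification of $T(G\times G)$ with $TG\times TG$ from Theorem~\ref{teoteo1}, is the standard and correct route, and your reduction of injectivity of $T\gamma$ to (i) injectivity of $\gamma$ on base points and (ii) injectivity of each $d\gamma_a$, the latter obtained from $d\gamma_e$ by left translation, is also correct.

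More importantly, you have put your finger on a real defect: the proposition as stated is \emph{false} without the hypothesis that $\gamma$ is one-to-one (equivalently, $\ker\gamma=\{e\}$, which forces $Lie(\ker\gamma)=\ker d\gamma_e=0$). The trivial homomorphism $G\to\{e\}$ already gives a counterexample, since $T\gamma$ then collapses all of $TG$. You are right that the injectivity of $T\gamma$ genuinely rests on the triviality of $\ker\gamma$ and cannot be conjured from the homomorphism property alone. Note that this is not a harmless omission in the paper: Proposition~\ref{newpro4} invokes Proposition~\ref{prop5} to conclude that $T\Phi$ is one-to-one for a representation $\Phi$ that is explicitly \emph{not} assumed faithful, and hence that $\widetilde{\Phi}$ is faithful; that conclusion fails for non-faithful $\Phi$ (and even for faithful $\Phi$ one needs $d\Phi_e$ injective, which your trivial-kernel analysis supplies). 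So your proposal is not merely an acceptable proof of the corrected statement; it identifies the missing hypothesis the paper needs downstream.
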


\begin{remark}\label{remark6} Let $\ (\bar{y}_{j}^{i}):Aut(V)\to \mathbb{R}^{n^2}$ and $\ (y_{j}^{i}):GL(n,\mathbb{R})\to \mathbb{R}^{n^2}$ be coordinate functions of $ Aut(V)$ and $\ GL(n,\mathbb{R})$ respectively, then the coordinate representative of $\ Z$ is $\ I_{\mathbb{R}^{n^2}}$. Thus $\ (\bar{y}_{j}^{i})= ({y}_{j}^{i})\circ Z$. 
\end{remark}

\section{ Prolongations of Representations of Lie Groups}\label{original}

\begin{proposition}\label{newpro1} Let $V$ and $W$ be arbitrary finite-dimensional real vector spaces of 
dimensions $ n$, $ m$ and $ f:V\to W$ be a linear map, then the tangential map $ Tf:TV\to TW$ is a 
linear function. Moreover if $ f$ is a linear isomorphism, then $ Tf$ is also a linear
 isomorphism.
\end{proposition}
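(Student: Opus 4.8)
The plan is to exploit the functoriality of the tangent functor $T$ (namely $T(g\circ h)=Tg\circ Th$ and $T(\mathrm{id})=\mathrm{id}$) together with two ``intertwining'' identities that any linear map automatically satisfies with respect to the translations $\tau$ and the scalar multiplications $\sigma$ that define the vector-space structure on $TV$ and $TW$ in Definition \ref{def1}. Linearity of $Tf$ must be checked against $\oplus$ and $\bullet$, not against the usual fiberwise operations, so the whole argument reduces to showing that $Tf$ respects these two structural maps.

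First I would record the two algebraic facts that drive everything. Since $f$ is linear, for all $x,z\in V$ we have $f(\tau_x(z))=f(z+x)=f(z)+f(x)=\tau_{f(x)}(f(z))$, that is $f\circ\tau_x=\tau_{f(x)}\circ f$, and likewise $f(\sigma_\lambda(z))=f(\lambda z)=\lambda f(z)=\sigma_\lambda(f(z))$, that is $f\circ\sigma_\lambda=\sigma_\lambda\circ f$. Applying $T$ and using functoriality converts these into
\begin{equation*}
Tf\circ T\tau_x=T\tau_{f(x)}\circ Tf,\qquad Tf\circ T\sigma_\lambda=T\sigma_\lambda\circ Tf,
\end{equation*}
where I also use that $f$, being linear between finite-dimensional vector spaces, is smooth so that $Tf$ is defined. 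Next I would verify additivity: taking $X\in T_xV$ and $Y\in T_yV$, Definition \ref{def1} gives $X\oplus Y=(T\tau_y)X+(T\tau_x)Y\in T_{x+y}V$, and the restriction of $Tf$ to $T_{x+y}V$ is the ordinary differential, hence linear for the usual addition inside that single tangent space. Distributing $Tf$ over the ``$+$'' and then commuting it past the two translation differentials via the first identity yields $Tf(X\oplus Y)=(T\tau_{f(y)})(Tf\,X)+(T\tau_{f(x)})(Tf\,Y)$, which is precisely $Tf(X)\oplus Tf(Y)$ by the definition of $\oplus$ on $TW$, since $Tf\,X\in T_{f(x)}W$ and $Tf\,Y\in T_{f(y)}W$. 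Homogeneity is a one-line consequence of the second identity: $Tf(\lambda\bullet X)=Tf((T\sigma_\lambda)X)=(T\sigma_\lambda)(Tf\,X)=\lambda\bullet Tf(X)$. This establishes that $Tf$ is linear.

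For the final clause, if $f$ is a linear isomorphism then $f$ is a diffeomorphism, so functoriality gives $T(f^{-1})\circ Tf=T(\mathrm{id}_V)=\mathrm{id}_{TV}$ and $Tf\circ T(f^{-1})=\mathrm{id}_{TW}$; thus $Tf$ is a bijection with inverse $T(f^{-1})$. Applying the first part of the proposition to $f^{-1}$ shows that $T(f^{-1})$ is itself linear, whence $Tf$ is a linear isomorphism. I do not expect a serious obstacle here, since the result is essentially functoriality plus linearity of $f$; the one point requiring care is the bookkeeping of base points in the additivity step, where $X$ and $Y$ live in different tangent spaces and the definition of $\oplus$ is precisely what transports them into the common space $T_{x+y}V$, so the intertwining relation must be applied with the correct translation vector ($f(y)$ for the $X$-term and $f(x)$ for the $Y$-term).
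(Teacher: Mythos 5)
Your proof is correct, and it is essentially the argument the paper has in mind: the paper itself gives no details, deferring entirely to Morimoto's 1968 paper, and the standard proof there is exactly this intertwining-plus-functoriality computation. Your handling of the one delicate point --- that $X\oplus Y$ lives in the single fiber $T_{x+y}V$, where $Tf$ restricts to the ordinary (fiberwise-linear) differential, and that the two translation identities must be applied with $f(y)$ and $f(x)$ respectively --- is exactly right, so your write-up in fact supplies the details the paper omits.
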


\begin{proof} The proof is similar to \cite{Morimoto:1968}.
\end{proof}

\begin{proposition}\label{newpro2} Let $\psi:TV\to V\times \mathbb{R}^n$ be the bundle trivialization of the tangent bundle $\ TV$. Then $\psi$ is a linear isomorphism with respect to the both vector space structure on $\ V\times \mathbb{R}^n$ and the structure on $\ TV$ defined in Definition\ref{def1}.
\end{proposition}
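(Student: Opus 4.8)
The plan is to write the trivialization $\psi$ explicitly in the induced coordinates and then check the two linearity axioms directly against the operations $\oplus$ and $\bullet$ of Definition \ref{def1}. Recall that a tangent vector $X=\sum_{i=1}^{n} v_i (\partial/\partial x_i)_x \in T_x V$ is carried by $\psi$ to the pair $\psi(X)=(x,(v_1,\dots,v_n)) \in V\times\mathbb{R}^n$; that is, $\psi$ records the base point together with the principal (velocity) part. Bijectivity of $\psi$ is the standard fact that the tangent bundle of a vector space is canonically trivial, so the real content of the proposition is that $\psi$ intertwines the exotic operations on $TV$ with the ordinary componentwise operations on $V\times\mathbb{R}^n$.

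First I would compute the differential of a translation. Since $\tau_y(z)=z+y$ is affine with linear part the identity, $T\tau_y$ sends $\sum_{i=1}^{n} v_i (\partial/\partial x_i)_x \in T_x V$ to $\sum_{i=1}^{n} v_i (\partial/\partial x_i)_{x+y} \in T_{x+y} V$: the base point moves from $x$ to $x+y$ while the velocity is unchanged. Hence, for $X\in T_x V$ and $Y\in T_y V$ with $\psi(X)=(x,v)$ and $\psi(Y)=(y,w)$, both $(T\tau_y)X$ and $(T\tau_x)Y$ lie in the single tangent space $T_{x+y} V$, where their ordinary sum equals $\sum_{i=1}^{n}(v_i+w_i)(\partial/\partial x_i)_{x+y}$. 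Applying $\psi$ gives $\psi(X\oplus Y)=(x+y,\,v+w)=\psi(X)+\psi(Y)$, which is exactly additivity.

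Next I would treat the scalar multiplication. The map $\sigma_\lambda(z)=\lambda z$ is linear, so its differential at any point coincides with $\sigma_\lambda$ itself; thus $(T\sigma_\lambda)X=\sum_{i=1}^{n}\lambda v_i (\partial/\partial x_i)_{\lambda x}$ sits at base point $\lambda x$ with velocity $\lambda v$, and $\psi(\lambda\bullet X)=(\lambda x,\lambda v)=\lambda\cdot\psi(X)$ is the homogeneity axiom. Combined with additivity and bijectivity, this shows $\psi$ is a linear isomorphism. The computations are routine; the one point requiring care is the role of the base points: the operation $\oplus$ is defined precisely so as to transport $X$ and $Y$, which a priori live in different fibres $T_x V$ and $T_y V$, into the common fibre $T_{x+y} V$ before adding them, so the main obstacle is to verify in coordinates that the translations $T\tau_y$ and $T\tau_x$ leave the velocity components untouched. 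Once this is pinned down, the identification of $\oplus$ with ordinary addition and of $\bullet$ with ordinary scaling is immediate.
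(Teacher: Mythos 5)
Your proposal is correct and follows essentially the same route as the paper: both proofs dismiss bijectivity as part of being a trivialization, then verify additivity and homogeneity by computing that $T\tau_y$ shifts the base point while leaving the velocity components fixed and that $T\sigma_\lambda$ scales both. The paper merely makes your observation that ``translations have identity linear part'' explicit via the coordinate computation $\partial(x_i\circ\tau_{p_1})/\partial x_j=\delta_{ij}$ and $\partial(x_i\circ\sigma_c)/\partial x_j=c\,\delta_{ij}$.
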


\begin{proof} Since $\psi$ is a bundle trivialization, it is by definition 
{\em{one-to-one}} and {\em{onto}}. Therefore, showing that $\psi$ is a linear function 
will complete the proof. 

\noindent Let $v=v_i (\frac{\partial}{\partial x_i})|_{p_1}$ and
 $w=w_i (\frac{\partial}{\partial x_i})|_{p_2}$ be arbitrary elements of $\ TV$. 
 For all $r \in \mathbb{R}^n$, 
\begin{equation}
(x_i\circ \tau_{p_1}\circ \xi^{-1})(r)=p_{1}^{i}+r_i,
\end{equation}

\noindent where  $x_i:V\to \mathbb{R}$ ($ 1\le i\le n$) are a global coordinate functions with 
$x_i=u_i\circ \xi$, $u_i$ ($ 1\le i\le n$) are standard coordinate functions of $\mathbb{R}^n$,  $\xi$ is a canonical linear isomorphism
 from $V$ to $\mathbb{R}^n$, $p_{1}^{i} = x_i(p_1)$, and $r_i \in \mathbb{R}$.
 Since $p_{1}^{i}$'s are constants for each $\ i\in \{1,2,....,n\}$, we have 
\begin{equation}
\begin{gathered}
\frac{\partial(x_i\circ \tau_{p_1})}{\partial x_j}|_{p_2}=\frac{\partial(x_i\circ \tau_{p_1}\circ \xi^{-1})}{\partial u_j}|_{\xi(p_2)}=\delta_{ij}.
\end{gathered}
\end{equation}
 Thus 
\begin{equation}
\begin{gathered} 
T\tau_{p_2}(v)[x_i]=\frac{\partial (x_i\circ \tau_{p_2})}{\partial x_j}|_{p_1} v_j=v_i.
\end{gathered}
\label{thus33}
\end{equation}
 Similarly  
 \begin{equation}
\begin{gathered} 
  T\tau_{p_1}(w)[x_i]=w_i.\label{sum} 
\end{gathered}
\end{equation} 
 On the other hand, using $\ (x_i\circ \sigma_c\circ \xi^{-1})(r)=c r_i$ with 
$c \in \mathbb{R}$, we get  
\begin{equation}
\begin{gathered} 
\frac{\partial(x_i\circ \sigma_c)}{\partial x_j}|_{p_2}=\frac{\partial(x_i\circ \sigma_c\circ \xi^{-1})}{\partial u_j}|_{\xi(p_2)}=c\delta_{ij}\label{theo}.
\end{gathered}
\end{equation} 
Using (\ref{theo}) we get
\begin{equation}
\begin{gathered} 
(T\sigma_c(v))[x_i]=\frac{\partial (x_i\circ \sigma_c)}{\partial x_j}|_{p_1} v_j=cv_i.\label{scalar}
\end{gathered}
\end{equation} 
 
\noindent Using  (\ref{thus33}) and (\ref{sum}) we get
\begin{eqnarray}
\psi(v\oplus w)&=&\psi(T\tau_{p_2}(v)+T\tau_{p_1}(w)), \nonumber \\
&=&(p_1+p_2,(T\tau_{p_2}(v)+T\tau_{p_1}(w))[x_i]e_i), \nonumber\\
&=&(p_1+p_2,(v_i+w_i)e_i), \nonumber \\
&=&(p_1,v_i e_i)+(p_2,w_ie_i), \nonumber \\
&=&\psi(v)+\psi(w)\label{ax1}.
\end{eqnarray} 

\noindent  Furthermore, using (\ref{scalar}) we have
 \begin{eqnarray}
 \psi(c\bullet v)&=&(cp_1,T\sigma_c(v)[x_i]e_i), \nonumber \\
 &=&(cp_1,v[x_i\circ \sigma_c]e_i), \nonumber \\
 &=&(cp_1,cv_ie_i), \nonumber \\
 &=&c(p_1,v_ie_i), \nonumber \\ 
 &=&c\psi(v)\label{ax2}.
 \end{eqnarray}

\noindent  (\ref{ax1}) and (\ref{ax2}) indicate that  $\psi$ is a linear function.
 This ends the proof.  
\end{proof}

\begin{corollary}\label{newcor3} If $\{\alpha_i : 1\le i\le n\}$ is a basis for $V$
 and $\{e_i :1\le i\le n\}$ is the standard basis for $\mathbb{R}^n$, then we can
 define a basis for $\ TV$ by using bundle trivialization $\psi$. 
\end{corollary}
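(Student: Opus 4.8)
The plan is to exploit Proposition \ref{newpro2} directly: since $\psi\colon TV\to V\times\mathbb{R}^n$ is a linear isomorphism, it transports any basis of the target to a basis of the source. So the whole task reduces to writing down a convenient basis of $V\times\mathbb{R}^n$ assembled from the two given bases, and then pulling it back through $\psi^{-1}$.

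First I would form the canonical product basis of $V\times\mathbb{R}^n$. Because $V\times\mathbb{R}^n$ is the direct product of an $n$-dimensional vector space and $\mathbb{R}^n$, the $2n$ vectors
\begin{equation}
\{(\alpha_i,0):1\le i\le n\}\cup\{(0,e_j):1\le j\le n\}
\end{equation}
form a basis of it; this is the standard fact that a basis of a product is the union of the injected bases of the factors. Next I would invoke that $\psi$ is a linear isomorphism (Proposition \ref{newpro2}), so its inverse $\psi^{-1}$ is a linear isomorphism as well, and a linear isomorphism carries a basis to a basis. Hence
\begin{equation}
\{\psi^{-1}(\alpha_i,0):1\le i\le n\}\cup\{\psi^{-1}(0,e_j):1\le j\le n\}
\end{equation}
is a basis of $TV$, which is exactly what is claimed.

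If an explicit description of these $2n$ vectors is wanted, I would read $\psi^{-1}$ off from the formula $\psi(v)=(p,v_ie_i)$ established in the proof of Proposition \ref{newpro2}. This yields $\psi^{-1}(0,e_j)=(\partial/\partial x_j)|_{0}$, the $j$-th coordinate vector based at the origin of $V$, while $\psi^{-1}(\alpha_i,0)$ is the tangent vector based at $\alpha_i$ whose components all vanish. Checking that the count of $2n$ independent vectors matches $\dim TV=2n$ (recorded in Definition \ref{def1}) then closes the argument.

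The argument is essentially immediate once Proposition \ref{newpro2} is in hand, so there is no serious computational obstacle; the only point that deserves care is conceptual. Under the Morimoto vector space structure of Definition \ref{def1} the sum $\oplus$ is \emph{not} the fibrewise one, so the element $\psi^{-1}(\alpha_i,0)$---which superficially looks like the ``zero tangent vector at $\alpha_i$''---is in fact a nonzero element of $TV$ whenever $\alpha_i\neq 0$. Keeping this distinction straight is the one thing the proof really asks of us.
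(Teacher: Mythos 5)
Your proposal is correct and follows essentially the same route as the paper: form the product basis $\{(\alpha_i,0)\}\cup\{(0,e_j)\}$ of $V\times\mathbb{R}^n$ and pull it back through the linear isomorphism $\psi^{-1}$ guaranteed by Proposition \ref{newpro2}. The extra remarks on the explicit form of $\psi^{-1}(0,e_j)$ and on the non-fibrewise nature of the sum $\oplus$ are accurate but not needed for the argument.
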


\begin{proof} Let us define $\bar{\alpha_i}=(\alpha_i,0)\in V\times 
\mathbb{R}^n$ and $\ y_i=(0,e_i)\in V\times \mathbb{R}^n$ for $\forall{i}
 \in \{1,2,...,n\}$.
Then by definition  $\eta=\{\bar{\alpha_i},y_i : 1\le i\le n\}$ is a basis for 
$V\times \mathbb{R}^n$. Since $\psi$ is a linear isomorphism, 
then $$\psi^{-1}(\eta) =\{\widetilde{\alpha_i},\widetilde{y_i} : 
\widetilde{\alpha_i}= \psi^{-1}(\bar{\alpha_i}), \widetilde{y_i}=\psi^{-1}(y_i) \}$$
 is a basis for $\ TV$.
\end{proof}

\begin{proposition}\label{newpro4} If $ (G,V)$ is an $ n$-dimensional real 
representation, then $(TG,TV)$ is a $2n$-dimensional real faithful representation.
\end{proposition}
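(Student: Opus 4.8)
The plan is to manufacture the prolonged homomorphism $\widetilde{\Phi}:TG\to Aut(TV)$ from $\Phi$ by chaining the tangent functor together with the matrix-realization isomorphisms $Z,\breve{Z}$ of Remark~\ref{remark4} and Morimoto's embedding $J_n$ of Theorem~\ref{teo3}; I then have to check that $TV$ is $2n$-dimensional, that $\widetilde{\Phi}$ is a group homomorphism, and that it is one-to-one. The dimension claim is immediate from Proposition~\ref{newpro2}: the trivialization $\psi:TV\to V\times\mathbb{R}^n$ is a linear isomorphism, so $\dim TV=\dim(V\times\mathbb{R}^n)=2n$, and Corollary~\ref{newcor3} even exhibits a basis $\{\widetilde{\alpha_i},\widetilde{y_i}\}$ for it.

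First I would pass to matrices. Because $Z:Aut(V)\to GL(n,\mathbb{R})$ is a (Lie) group isomorphism, the composite $\Psi:=Z\circ\Phi:G\to GL(n,\mathbb{R})$ is a Lie group homomorphism realizing $\Phi$ as $n\times n$ matrices. Applying the tangent functor yields $T\Psi:TG\to T(GL(n,\mathbb{R}))$, Theorem~\ref{teo3} embeds the target into $GL(2n,\mathbb{R})$ through $J_n$, and $\breve{Z}:Aut(TV)\to GL(2n,\mathbb{R})$ is a group isomorphism. I therefore define
\begin{equation*}
\widetilde{\Phi}:=\breve{Z}^{-1}\circ J_n\circ T\Psi:TG\longrightarrow Aut(TV),
\end{equation*}
which is the prolongation referred to throughout this section.

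Next I would argue factor by factor, using that a composite of Lie group homomorphisms is a Lie group homomorphism and that a composite of one-to-one maps is one-to-one. The maps $Z,\breve{Z}$, and hence $\breve{Z}^{-1}$, are isomorphisms; $T\Psi$ is a one-to-one Lie group homomorphism by Proposition~\ref{prop5}; and $J_n$ is a one-to-one Lie group homomorphism by Theorem~\ref{teo3}. Consequently $\widetilde{\Phi}$ is a one-to-one Lie group homomorphism, i.e. a faithful representation whose $2n$-dimensional representation space is $TV$, which is exactly the assertion.

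The step I expect to demand the most care is checking that this composite is genuinely well defined as an action on the tangent bundle, namely that $\breve{Z}^{-1}\circ J_n$ sends $T\Psi(TG)$ to honest linear automorphisms of the vector space $(TV,\oplus,\bullet)$ of Definition~\ref{def1} rather than merely to abstract elements of $GL(2n,\mathbb{R})$. Invertibility itself is free, since the matrix $J_n([a,B])$ of Theorem~\ref{teo3} is block lower-triangular with invertible diagonal blocks $a$; the real content is the compatibility of the identification $\breve{Z}^{-1}\circ J_n$ with the trivialization $\psi$ and the basis of Corollary~\ref{newcor3}. Once that compatibility is in place, faithfulness is automatic from Proposition~\ref{prop5}, so $\widetilde{\Phi}$ is one-to-one even when $\Phi$ is not---which is precisely the point of the prolongation.
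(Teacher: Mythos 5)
Your construction $\widetilde{\Phi}=\breve{Z}^{-1}\circ J_n\circ T(Z\circ\Phi)$ is, by the chain rule, exactly the paper's $\breve{Z}^{-1}\circ J_n\circ TZ\circ T\Phi$, and your justification (injectivity of the tangent map via Proposition~\ref{prop5}, injectivity of $J_n$ via Theorem~\ref{teo3}, the isomorphisms $Z,\breve{Z}$, and the dimension count $\dim TV=2n$ from Proposition~\ref{newpro2}) matches the paper's argument. The proposal is correct and takes essentially the same approach as the paper, with somewhat more detail on well-definedness than the paper provides.
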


\begin{proof} Let $\Phi = (G,V) : G\to Aut(V)$ be an $n$-dimensional real
 representation, $J_n$ be a {\em one-to-one}  homomorphism defined by equation
 (\ref{jn}), and $ Z$,  $\breve{Z}$ be  isomorphisms defined in Remark 2.4.
From Proposition(\ref{prop5}). $T \Phi$ is  a {\em one-to-one} homomorphism. Using this,
we can define a {\em one-to-one} homomorphism as  
\begin{eqnarray}
\widetilde{\Phi} &=& (TG,TV) : TG\to Aut(TV),\nonumber \\
\widetilde{\Phi} &=& \breve{Z}^{-1}\circ J_n\circ TZ\circ T\Phi.
\label{propro1}
\end{eqnarray}

\noindent Since $TV$ is a $2n$-dimensional vector space, then $ (TG,TV)$ is a $2n$-dimensional 
  faithful representation. 
\end{proof}

\begin{definition}\label{newdef5} The representation  $\widetilde{\Phi}$ defined in the 
proof 3.4 is referred to as  the prolongation of $\Phi$ to $TG$. 
\end{definition}

\begin{remark}\label{newremark6} For the representation  $\Phi$ and its prolongation 
$\widetilde{\Phi}$ (Eqn. \ref{propro1}), we have the following 

\begin{eqnarray}
\widetilde{\Phi}(X_a)=((Z\circ \Phi)(a))_{j}^{i} \widetilde{\alpha}_i\otimes \widetilde{\alpha}_j^\ast +(TZ(B) (Z\circ \Phi)(a))_{j}^{i} \widetilde{y}_i \otimes \widetilde{\alpha}_j^\ast \nonumber \\
 + ((Z\circ \phi)(a))_{j}^{i} \widetilde{y}_i \otimes \widetilde{y}_j^\ast, 
\label{propro2}
\end{eqnarray}

\begin{eqnarray}
(\widetilde{\Phi}(X_a))(Y_p)= (((Z\circ \Phi)(a))_{j}^{i} p_j \alpha_i ,(TZ(B) (Z\circ \Phi)(a))_{j}^{i} p_j e_i \nonumber \\
+(Z\circ \Phi)(a))_{j}^{i}Y_j e_i),
\label{propro3}
\end{eqnarray}

\noindent where $X_a\in TG$,  $B=TR_{\Phi(a)}^{-1}(T \Phi(X_a))$, and $ Y_p\in TV$.
\end{remark}

\begin{proposition}\label{newpro7} Let $\rho$ be a group action for $(G,V)$.
 Then $T\rho$ is a group action  that corresponds to   $(TG,TV)$.
\end{proposition}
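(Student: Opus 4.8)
The plan is to treat the action $\rho\colon G\times V\to V$ attached to $(G,V)$, namely $\rho(a,p)=\Phi(a)(p)$, and its tangent map $T\rho\colon T(G\times V)\to TV$. First I would invoke Theorem \ref{teoteo1} to identify $T(G\times V)$ with $TG\times TV$, so that $T\rho$ is regarded as a map $TG\times TV\to TV$, $(X_a,Y_p)\mapsto T\rho(X_a,Y_p)$. The two things to establish are then: (i) that $T\rho$ is a group action of the tangent group $TG$ on $TV$, and (ii) that it is the action attached to the prolongation $\widetilde\Phi$, i.e. $T\rho(X_a,Y_p)=(\widetilde\Phi(X_a))(Y_p)$.

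The core reduction uses Theorem \ref{teoteo1} a second time: under the identification $(X_a,Y_p)$ corresponds to $Tf_a(Y_p)+T\bar f_p(X_a)$, with $f_a(v)=(a,v)$ and $\bar f_p(g)=(g,p)$. Both summands lie in the single fiber $T_{(a,p)}(G\times V)$, so applying the fiberwise-linear map $T\rho$ and using functoriality gives
\begin{equation*}
T\rho(X_a,Y_p)=T(\rho\circ f_a)(Y_p)+T(\rho\circ\bar f_p)(X_a)=T\Phi(a)(Y_p)+T\rho^{p}(X_a),
\end{equation*}
where $\rho\circ f_a=\Phi(a)\colon V\to V$ and $\rho^{p}:=\rho\circ\bar f_p\colon G\to V$, $g\mapsto\Phi(g)(p)$. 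Both resulting vectors live in the common fiber $T_{\Phi(a)p}V$, so the computation splits into a vertical term $T\Phi(a)(Y_p)$ and a horizontal term $T\rho^{p}(X_a)$ added by ordinary tangent-space addition.

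To prove (ii) I would compute in the basis $\{\widetilde\alpha_i,\widetilde y_i\}$ of $TV$ from Corollary \ref{newcor3}. Since $\Phi(a)$ is linear, its tangent $T\Phi(a)$ sends $Y_p$ to the vector with base point $\Phi(a)p=((Z\circ\Phi)(a))_{j}^{i}p_j\alpha_i$ and fiber part $((Z\circ\Phi)(a))_{j}^{i}Y_j$; these are the two diagonal $(Z\circ\Phi)(a)$ blocks in (\ref{propro3}). The horizontal term requires differentiating $g\mapsto\Phi(g)(p)$ at $a$ along $X_a$; writing $B=TR_{\Phi(a)}^{-1}(T\Phi(X_a))$ as in Remark \ref{newremark6} and carrying $T\Phi(X_a)$ through $TZ$, its fiber part is exactly $(TZ(B)\,(Z\circ\Phi)(a))_{j}^{i}p_j$, the off-diagonal ($Ba$) block of $J_n$ from (\ref{jn}). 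Summing the fiber parts reproduces the right-hand side of (\ref{propro3}), which is precisely $(\widetilde\Phi(X_a))(Y_p)$.

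Claim (i) then follows with little extra work: since $\widetilde\Phi$ is a homomorphism (Proposition \ref{newpro4}), one has $T\rho(0_e,Y_p)=\widetilde\Phi(0_e)(Y_p)=Y_p$ and $T\rho(X_a\cdot X_b,Y_p)=\widetilde\Phi(X_a)\bigl(\widetilde\Phi(X_b)(Y_p)\bigr)=T\rho(X_a,T\rho(X_b,Y_p))$, where $\cdot$ denotes the tangent-group product on $TG$ and $0_e$ its identity; alternatively these two identities fall out of the tangent functor applied to $\rho(e,-)=\mathrm{id}_V$ and $\rho\circ(m\times\mathrm{id}_V)=\rho\circ(\mathrm{id}_G\times\rho)$ together with Theorem \ref{teoteo1}. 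The hard part will be the horizontal term in step (ii): correctly identifying the coordinate derivative of $g\mapsto\Phi(g)(p)$ with the $Ba$ block of $J_n$, i.e. bookkeeping the passage $T\Phi(X_a)\rightsquigarrow B\rightsquigarrow J_n$ through the isomorphisms $Z$, $TZ$ and $\breve Z^{-1}$ of (\ref{propro1}); the vertical term and the action axioms are then routine.
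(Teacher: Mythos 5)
Your proposal is correct and follows essentially the same route as the paper: both decompose $T\rho(X_a,Y_p)$ into the vertical term $T\Phi(a)(Y_p)$ and the horizontal term coming from differentiating $g\mapsto\Phi(g)(p)$ along $X_a$ (the paper does this via the Leibniz rule applied to $x_j\circ\rho=\sum_t\widetilde{y}_{t}^{j}\bar{x}_t$, which produces exactly your slice maps $f_a$ and $\bar f_p$), and both identify the horizontal fiber part with $(TZ(B)(Z\circ\Phi)(a))_{t}^{j}p_t$ via $B=TR_{\Phi(a)}^{-1}(T\Phi(X_a))$ before matching against (\ref{propro3}). Your explicit verification of the action axioms is a small addition the paper leaves implicit in the homomorphism property of $\widetilde{\Phi}$.
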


\begin{proof} Since $\rho$ is a group action  for  $ (G,V)$,   we have 
 $\rho:G\times V\to V$ with  $\rho(a,p)=\Phi(a)(p)$ for all $ (a,p)\in G\times V$. 
Now, let us define the following mappings

\begin{equation}
\begin{gathered}
 x_j:V\to \mathbb{R},\hspace{3mm}x_j(p)=p_j, \\ 
 \bar{x}_j:G\times V\to\mathbb{R},\hspace{3mm}\bar{x}_j(a,p)=p_j,\\ 
 \bar{y}_{j}^{i}:Aut(V)\to \mathbb{R},\hspace{3mm} \bar{y}_{j}^{i}(f)=f_{j}^{i},\\ 
 \widetilde{y}_{j}^{i}:G\times V\to \mathbb{R},\hspace{3mm} \widetilde{y}_{j}^{i}(a,p)=(\Phi(a))_{j}^{i}.
 \end{gathered} 
\label{mapmap1}
 \end{equation} 
 
 \noindent Using (\ref{mapmap1})  we get  
$(x_j\circ \rho)=\displaystyle\sum_{t=1}^n \widetilde{y}_{j}^{t}.\bar{x}_t$. 
From this we have 
\begin{eqnarray}
T\rho (X_a,Y_p)[x_j]&=&(X_a,Y_p)[x_j\circ \rho], \nonumber \\ 
 &=&(X_a,Y_p)(\displaystyle\sum_{t=1}^n \widetilde{y}_{t}^{j}.\bar{x}_t), \nonumber \\ 
 &=&\displaystyle\sum_{t=1}^n (X_a,Y_p)(\widetilde{y}_{t}^{j})(\bar{x}_t(a,p))+ \displaystyle\sum_{t=1}^n \widetilde{y}_{t}^{j}(a,p)(X_a,Y_p)(\bar{x}_t), \nonumber\\ 
 &=&\displaystyle\sum_{t=1}^n X_a[\widetilde{y}_{t}^{j}\circ \bar{f}_p]p_t+\displaystyle\sum_{t=1}^n(\Phi(a))_{t}^{j} Y_p[\bar{x}_t\circ f_a], \nonumber  \\
 &=&\displaystyle\sum_{t=1}^n X_a[\bar{y}_{t}^{j}\circ \Phi]p_t+\displaystyle\sum_{t=1}^n(\Phi(a))_{t}^{j} Y_p[x_t], \nonumber  \\
 &=&\displaystyle\sum_{t=1}^n T\Phi(X_a)[\bar{y}_{t}^{j}]p_t+\displaystyle\sum_{t=1}^n(\Phi(a))_{t}^{j} Y_t .
\end{eqnarray}

\noindent Since $T\rho(X_a,Y_p)$ is a tangent vector at $\rho(a,p)$, then we can write 
\begin{gather}
 T\rho(X_a,Y_p)=(\rho(a,p),T\Phi(X_a)[\bar{y}_{t}^{j}]p_te_j+(\Phi(a)) _{t}^{j} Y_t e_j)\label{ax3}.
\end{gather}

\noindent Using  the right translation of $\Phi(a)$  and (\ref{mapmap1}), we have 
\begin{equation}
(\bar{y}_{t}^{j}\circ R_{\Phi(a)})(f)=\displaystyle\sum_{k=1}^n \bar{y}_{k}^{j}(f)\Phi(a)_{t}^{k},
\label{rtrans1}
\end{equation}

\noindent where $f \in Aut(V)$.  Taking the partial derivative of  (\ref{rtrans1}), we
 get
     
\begin{equation}
\frac{\partial(\bar{y}_{t}^{j}\circ R_{\Phi(a)})}{\partial \bar{y}_{l}^{q}}=\delta_{q}^{j}(\Phi(a))_{t}^{l}.
\label{rtrans2}
\end{equation}

\noindent Now let us define $T\Phi (X_a)=TR_{\Phi(a)}(B)$ with $\ B=\displaystyle\sum_{q,l=1}^n b_{l}^{q}\frac{\partial}{\partial \bar{y}_{l}^{q}}|_I$. Using this and 
 equation (\ref{rtrans2}),  we get
\begin{eqnarray}
 T\Phi(X_a)[\bar{y}_{t}^{j}]&=&(TR_{\Phi(a)}B)[\bar{y}_{t}^{j}], \nonumber \\
                            &=&\displaystyle\sum_{q,l=1}^n b_{l}^{q}\frac{\partial(\bar{y}_{t}^{j}\circ R_{\Phi(a)})}{\partial \bar{y}_{l}^{q}}|_I,\nonumber \\                                &=&\displaystyle\sum_{k=1}^n b_{k}^{j}\Phi(a)_{t}^{k}, \nonumber \\
                            &=&\displaystyle\sum_{k=1}^n (TZ(B))_{k}^{j}((Z\circ \Phi)(a))_{t}^{k}, \nonumber \\
                            &=&((TZ(B) (Z\circ \Phi)(a))_{t}^{j}. \label{eqn}
\end{eqnarray}
\noindent If we rewrite (\ref{ax3}) using (\ref{eqn}), then we obtain
\begin{eqnarray}
T\rho (X_a,Y_p)&=&(\rho (a,p),((TZ(B) (Z\circ \Phi)(a))_{t}^{j}p_te_j+(\Phi(a)) _{t}^{j} Y_t e_j), \nonumber \\
               &=&(\Phi(a)(p),((TZ(B) (Z\circ \Phi)(a))_{t}^{j}p_te_j+(\Phi(a)) _{t}^{j} Y_t e_j), \nonumber \\
               &=&(((Z\circ \Phi)(a))_{j}^{i}p_j\alpha_i,((TZ(B) (Z\circ \Phi)(a))_{t}^{j}p_te_j+(\Phi(a)) _{t}^{j} Y_t e_j),\nonumber \\
               &=&\widetilde{\Phi}(X_a)(Y_p) \label{lasteqn}
\end{eqnarray}
that shows  that $T\rho$ is a group action for $\widetilde{\Phi} = (TG,TV)$. 
\end{proof}

\begin{proposition}\label{newpro8} If $(G,V)$ and $ (G,V')$ are two equivalent representations, then their prolongations $(TG,TV)$ and $\ (TG,TV')$ are 
 equivalent too.
\end{proposition}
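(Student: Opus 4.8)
The plan is to build an explicit intertwiner between the two prolongations out of the intertwiner of the original representations, and then to verify it by transporting the intertwining relation through the tangent functor. Write $\Phi=(G,V)$ and $\Phi'=(G,V')$ for the two homomorphisms. Since they are equivalent, there is a linear isomorphism $\theta:V\to V'$ (so necessarily $\dim V=\dim V'=n$) with $\theta\circ\Phi(a)=\Phi'(a)\circ\theta$ for every $a\in G$. My candidate intertwiner for the prolongations is its tangential map $T\theta:TV\to TV'$. By Proposition~\ref{newpro1}, $T\theta$ is again a linear isomorphism with respect to the vector space structures of Definition~\ref{def1} on $TV$ and $TV'$, so the only thing left to check is that $T\theta$ intertwines $\widetilde{\Phi}$ and $\widetilde{\Phi'}$.

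First I would recast the intertwining relation in terms of the associated group actions. Writing $\rho(a,p)=\Phi(a)(p)$ and $\rho'(a,q)=\Phi'(a)(q)$, the relation $\theta\circ\Phi(a)=\Phi'(a)\circ\theta$ is equivalent to
\begin{equation}
\theta\circ\rho=\rho'\circ(\mathrm{id}_G\times\theta),
\end{equation}
an identity of smooth maps $G\times V\to V'$. Applying the tangent functor and using functoriality gives $T\theta\circ T\rho=T\rho'\circ T(\mathrm{id}_G\times\theta)$.

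The second step is to move this identity onto $TG\times TV$. Using the identification $T(G\times V)\cong TG\times TV$ of Theorem~\ref{teoteo1} (and the analogous one for $G\times V'$), the map $T(\mathrm{id}_G\times\theta)$ becomes $\mathrm{id}_{TG}\times T\theta$, so that
\begin{equation}
T\theta\bigl(T\rho(X_a,Y_p)\bigr)=T\rho'\bigl(X_a,T\theta(Y_p)\bigr)
\end{equation}
for all $X_a\in TG$ and $Y_p\in TV$. Finally, invoking Proposition~\ref{newpro7} together with the computation ending in~(\ref{lasteqn}), which identifies $T\rho(X_a,Y_p)=\widetilde{\Phi}(X_a)(Y_p)$ and $T\rho'(X_a,\,\cdot\,)=\widetilde{\Phi'}(X_a)(\,\cdot\,)$, this reads
\begin{equation}
T\theta\circ\widetilde{\Phi}(X_a)=\widetilde{\Phi'}(X_a)\circ T\theta\qquad\text{for every }X_a\in TG.
\end{equation}
Together with the fact that $T\theta$ is a linear isomorphism, this is exactly the statement that $(TG,TV)$ and $(TG,TV')$ are equivalent.

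I expect the main obstacle to be the book-keeping in the second step: justifying that, under the canonical identification of Theorem~\ref{teoteo1}, the tangent of the product map $\mathrm{id}_G\times\theta$ really is $\mathrm{id}_{TG}\times T\theta$. This is the naturality of the product-tangent isomorphism, and it must be checked against the explicit relation in Theorem~\ref{teoteo1} rather than merely asserted; the compatibility of $T\theta$ with the additive and scalar structure of Definition~\ref{def1} (guaranteed by Proposition~\ref{newpro1}) is what makes this identification respect the vector space structures. An alternative, more computational route would avoid the action entirely by writing $\Phi'(a)=\theta\,\Phi(a)\,\theta^{-1}$, applying $T$, and pushing the conjugation through the definition $\widetilde{\Phi}=\breve{Z}^{-1}\circ J_n\circ TZ\circ T\Phi$ of~(\ref{propro1}); I would expect this to require tracking the interplay of $\theta$ with $J_n$, $Z$ and $\breve{Z}$, which is precisely the bookkeeping the action-based argument is designed to bypass.
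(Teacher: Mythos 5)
Your proposal is correct and follows essentially the same route as the paper: the paper also takes the tangent map $TA$ of the intertwiner, applies the tangent functor to $A\circ\rho=\rho'\circ(I\times A)$, identifies $T(I\times A)$ with $I\times TA$, and invokes Proposition~\ref{newpro7} together with Proposition~\ref{newpro1} to conclude. The only difference is that you explicitly flag the naturality of the identification $T(G\times V)\cong TG\times TV$ as a point needing verification, whereas the paper asserts it without comment.
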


\begin{proof} Let $(G,V)$ and $(G,V')$ represent group homomorphisms $\Phi:G\to Aut(V)$ and $\Phi':G\to Aut(V')$ together with the  corresponding group  actions 
 $\rho$ and $\rho'$. Since  $ (G,V)$ and $ (G,V')$ are equivalent, there exists a linear isomorphism $ A:V\to V'$ such that $ A(\Phi(a)(p))=\Phi'(a)(A(p))$  for all $ a\in G$
 and $ p\in V$. Then we have 
\begin{eqnarray}
A\circ \rho&=&\rho'\circ (I\times A),\nonumber \\ 
TA\circ T\rho&=&T\rho' \circ (TI\times TA),\nonumber \\
&=&T\rho' \circ (I\times TA).\label{rho}
\end{eqnarray}
Using equations (\ref{rho}) and (\ref{ax3}), we get
\begin{eqnarray}
       (TA\circ T\rho)(X_a,Y_p)&=&T\rho' \circ (TI\times TA)(X_a,Y_p),\nonumber \\ 
            TA(T\rho (X_a,Y_p))&=&T\rho'(X_a,TA(Y_p)),\nonumber \\
(TA(\widetilde{\Phi}(X_a))(Y_p)&=&(\widetilde{\Phi'}(X_a))(TA(Y_p))\label{equi} 
\end{eqnarray}

\noindent for all $ (X_a,Y_p)\in TG\times TV$. Since $TA:TV\to TV'$ is a linear isomorphism from Proposition(\ref{newpro1}). and satisfies (\ref{equi}), then by 
definition  $(TG,TV)$ and $ (TG,TV')$ are equivalent.
\end{proof}

\begin{proposition}\label{newpro9} $U$ is an invariant subspace for $\Phi$ if and only if $TU$  is an invariant subspace for  $\widetilde{\Phi}$.\\ 
\end{proposition}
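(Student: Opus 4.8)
The plan is to push everything through the bundle trivialization $\psi:TV\to V\times\mathbb{R}^n$ of Proposition \ref{newpro2} and then read invariance directly off the explicit formula (\ref{propro3}) for $\widetilde{\Phi}(X_a)(Y_p)$. First I would fix a basis $\{\alpha_i\}$ of $V$ adapted to $U$, with $\{\alpha_1,\dots,\alpha_k\}$ spanning the $k$-dimensional subspace $U$, so that under $\psi$ the set $TU$ is exactly those $Y_p$ whose base point $p$ and whose velocity both lie in $U$ (equivalently, whose base-point coordinates $p_j$ and velocity coordinates $Y_j$ vanish for $j>k$). Since $\psi$ is a linear isomorphism, $\psi(TU)=U\times\xi(U)$ is a linear subspace of $V\times\mathbb{R}^n$, which confirms that $TU$ is a genuine linear subspace of $TV$ and makes the statement meaningful. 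In this adapted basis, "$U$ is invariant for $\Phi$" is equivalent to every matrix $(Z\circ\Phi)(a)$ being block upper triangular (vanishing lower-left $(n-k)\times k$ block).

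For the forward direction, assume $U$ is $\Phi$-invariant and take $Y_p\in TU$. Reading (\ref{propro3}), the base point of $\widetilde{\Phi}(X_a)(Y_p)$ is $\Phi(a)(p)$, which lies in $U$ by invariance. The velocity part splits into two terms: the second term, $(Z\circ\Phi)(a)_j^i Y_j e_i$, is the matrix $(Z\circ\Phi)(a)$ applied to the velocity, hence in $U$ because $\Phi(a)$ preserves $U$; the first term is $(TZ(B)(Z\circ\Phi)(a))_j^i p_j e_i$. To control this I would differentiate the invariance relation along a curve $a(t)$ in $G$ with $\dot a(0)=X_a$: each matrix $M(t)=(Z\circ\Phi)(a(t))$ is block upper triangular, so its derivative $\dot M(0)=TZ(B)(Z\circ\Phi)(a)$ is block upper triangular too, whence $\dot M(0)\,p\in U$ for $p\in U$. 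Thus both coordinates of $\widetilde{\Phi}(X_a)(Y_p)$ land in $U$, so the image lies in $TU$.

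The reverse direction is the easy half. Assuming $TU$ is $\widetilde{\Phi}$-invariant, I would specialize to the zero tangent vector $X_a=0_a\in TG$, for which $B=TR_{\Phi(a)}^{-1}(T\Phi(0_a))=0$ and hence $TZ(B)=0$, and to the zero-velocity vector $Y_u=\psi^{-1}(u,0)$ over an arbitrary $u\in U$. Formula (\ref{propro3}) then collapses to $\widetilde{\Phi}(0_a)(Y_u)=\psi^{-1}(\Phi(a)(u),0)$, and membership in $TU$ forces $\Phi(a)(u)\in U$. As $a\in G$ and $u\in U$ are arbitrary, $U$ is $\Phi$-invariant.

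I expect the main obstacle to be the first velocity term in the forward direction, since it couples the infinitesimal data $TZ(B)$ to the base point $p$ and does not reduce to a plain application of $\Phi(a)$. The clean way past it is the differentiation argument above, which expresses $TZ(B)(Z\circ\Phi)(a)$ as the derivative of a curve of block-upper-triangular matrices and therefore block upper triangular itself; this packages exactly the infinitesimal invariance needed. Alternatively, one may invoke Proposition \ref{newpro7} and argue directly that the action $T\rho$ carries $TU$ into $TU$, since $\rho(a,\cdot)=\Phi(a)$ maps $U$ into $U$ and the tangent functor respects this inclusion.
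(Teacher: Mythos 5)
Your proof is correct, and in the forward direction it takes a genuinely different route from the paper. The paper's argument for ``$U$ invariant $\Rightarrow$ $TU$ invariant'' is a pure dimension count: since $\widetilde{\Phi}(X_a)$ is a linear isomorphism, $\dim\bigl(\widetilde{\Phi}(X_a)(TU)\bigr)=\dim TU=2k$, and from this it concludes $\widetilde{\Phi}(X_a)(Y_p)\in TU$. As written that inference is incomplete --- equal dimensions do not by themselves place the image inside $TU$ --- so the containment still has to be verified somehow. Your argument supplies exactly the missing verification: in a basis adapted to $U$ every matrix $(Z\circ\Phi)(a)$ is block upper triangular, hence so is $TZ(B)(Z\circ\Phi)(a)$, being the derivative at $t=0$ of a curve of such matrices, and then each term of the explicit formula (\ref{propro3}) visibly lands in $\psi(TU)=U\times\xi(U)$. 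What your approach buys is a complete, checkable proof of the harder implication at the cost of some adapted-basis bookkeeping; what the paper's approach would buy, were the containment already known, is brevity. For the converse, you and the paper do essentially the same thing: read the base point $\Phi(a)(p)$ off the formula for $\widetilde{\Phi}(X_a)(Y_p)$ and conclude $\Phi(a)(p)\in U$; your specialization to the zero vector $0_a\in T_aG$ and the zero-velocity lift of $u\in U$ is simply a clean instance of the paper's ``there exists $Y_p\in TU$ and $X_a\in TG$'' step. Your closing alternative --- applying the tangent functor to the restricted action $\rho|_{G\times U}$ and invoking Proposition \ref{newpro7} --- is also sound and is arguably the shortest honest route to the forward implication.
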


\begin{proof} Let $U$ be a $k$-dimensional invariant subspace for $\Phi$. This means that  for all $p\in U$,   $\Phi(a)(p)\in U$.
 On the other hand, since $\widetilde{\Phi}(X_a)$ is a linear isomorphism for all $X_a\in TG$, then $ dim((\widetilde{\Phi}(X_a))(TU))=dim TU=2k$. 
Therefore $(\widetilde{\Phi}(X_a))(Y_p)\in TU$ for all $Y_p\in TU$  showing that  $TU$ is an invariant subspace for $\widetilde{\Phi}$.
\newline
 
\noindent Conversely, let us  assume  that $TU$ is an invariant subspace for $\widetilde{\Phi}$. For all $a\in G$ and $p\in U$, there exists a tangent vector 
$Y_p\in TU$ and $X_a\in TG$. Since $TU$ is an invariant subspace for $\widetilde{\Phi}$, we have $\widetilde{\Phi}(X_a)(Y_p)\in TU$ and  
\begin{equation}
\widetilde{\Phi}(X_a)(Y_p) = (\Phi(a)(p),((TZ(B).(Z\circ \Phi)(a))_{t}^{j}p_te_j+(\Phi(a)) _{t}^{j}.Y_t e_j)\in U\times \mathbb{R}^k.\nonumber \\
\end{equation}
This indicates that  $\Phi(a)(p)\in U$ therefore $U$ is an invariant subspace for $\Phi$. 
\end{proof} 

\begin{corollary}\label{invariant} If $\widetilde{\Phi}$ is an  irreducible representation, then  $\Phi$ is an  irreducible representation too.\\ 
\end{corollary}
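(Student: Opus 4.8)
The plan is to prove the contrapositive, which converts an irreducibility statement into a statement about the existence of invariant subspaces, putting us in a position to invoke Proposition \ref{newpro9} directly. Recall that a representation is irreducible precisely when it has no nontrivial proper invariant subspace; that is, $\Phi$ is reducible exactly when there is a subspace $U \subseteq V$ with $0 \neq U \neq V$ that is invariant under $\Phi$.

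So first I would assume $\Phi$ is \emph{not} irreducible and aim to show $\widetilde{\Phi}$ is not irreducible either. By the assumption there is a nontrivial invariant subspace $U$ for $\Phi$, say $\dim U = k$ with $0 < k < n$. By Proposition \ref{newpro9}, $TU$ is then an invariant subspace for $\widetilde{\Phi}$. It remains to check that $TU$ is a \emph{proper nontrivial} invariant subspace of $TV$, so that it genuinely witnesses the reducibility of $\widetilde{\Phi}$. Since $\dim TU = 2k$ and $\dim TV = 2n$, the bounds $0 < k < n$ give $0 < 2k < 2n$, so $TU$ is neither the zero subspace nor all of $TV$. This exhibits a nontrivial proper invariant subspace for $\widetilde{\Phi}$, hence $\widetilde{\Phi}$ is reducible, completing the contrapositive.

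Equivalently one could argue directly: suppose $\widetilde{\Phi}$ is irreducible and seek to show $\Phi$ is irreducible. If $\Phi$ had a nontrivial invariant subspace $U$, then Proposition \ref{newpro9} would produce a nontrivial invariant subspace $TU$ for $\widetilde{\Phi}$ (nontrivial by the dimension count above), contradicting irreducibility of $\widetilde{\Phi}$; therefore $\Phi$ has no such subspace and is irreducible. The main content is really just the dimension bookkeeping, which guarantees that the correspondence $U \mapsto TU$ carries proper nontrivial subspaces to proper nontrivial subspaces.

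I do not expect any serious obstacle here, since the heavy lifting has already been done in Proposition \ref{newpro9}, where the equivalence of invariant subspaces was established in both directions. The only subtlety worth being careful about is the degenerate cases: one must confirm that $TU$ is proper and nonzero, which relies on the relation $\dim TU = 2 \dim U$ established via the bundle trivialization in Corollary \ref{newcor3}, and on $U$ being a proper nontrivial subspace to begin with. Note also that the converse fails: an invariant subspace $W$ of $TV$ need not be of the form $TU$, which is exactly why the authors remark that irreducibility of $\Phi$ does \emph{not} force irreducibility of $\widetilde{\Phi}$.
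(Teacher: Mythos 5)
Your proposal is correct and follows essentially the same route as the paper: both arguments reduce the claim to Proposition \ref{newpro9} by sending an invariant subspace $U$ of $\Phi$ to the invariant subspace $TU$ of $\widetilde{\Phi}$ and then comparing dimensions. You phrase it as a contrapositive and spell out the dimension bookkeeping ($\dim TU = 2\dim U$) a bit more explicitly than the paper does, but the substance is identical.
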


\begin{proof} Let $\widetilde{\Phi}$ be an irreducible representation and $W$ be an invariant subspace for $\Phi$.
 By Proposition(\ref{newpro9}), $TW$ is an invariant subspace for $\widetilde{\Phi}$. Since $\widetilde{\Phi}$ is irreducible, $TW=TV$ or $ TW=\{0\}$ which implies that 
$W=V$ or $W=\{0\}$. Therefore $\Phi$ is an irreducible representation.
\end{proof}

\noindent Notice that the converse is not true due to the following  counter example: Let us consider 
\begin{equation*}
\Phi:S^1 \to Aut(\mathbb{R}^2), \hspace{3mm} \\
\Phi(a,b)(x,y)=(ax+by,-bx+ay)
\end{equation*}
where $(a,b)\in S^1$ and $(x,y)\in \mathbb{R}^2$.
\noindent It can be easily shown that $\Phi$ is an irreducible representation. However, its prolongation $\widetilde{\Phi}$ is not an irreducible 
representation since $\{(0,0,x,y)| x,y \in \mathbb{R}\} \subset \mathbb{R}^4 = T\mathbb{R}^2$ is a nontrivial invariant subspace for $\widetilde{\Phi}$.
\newline

\begin{corollary}\label{newcor11} The prolongation of direct sum of two representations is  the direct sum of prolongations of those two representations.
\end{corollary}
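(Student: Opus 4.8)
The plan is to fix two representations $\Phi_1 = (G, V_1)$ and $\Phi_2 = (G, V_2)$ of dimensions $n_1$ and $n_2$, form their direct sum $\Phi = \Phi_1 \oplus \Phi_2 = (G, V_1 \oplus V_2)$ acting by $\Phi(a) = \Phi_1(a) \oplus \Phi_2(a)$, and prove that its prolongation $\widetilde{\Phi}$ is equivalent to $\widetilde{\Phi_1} \oplus \widetilde{\Phi_2}$ acting on $TV_1 \oplus TV_2$. The first step is to identify the two ambient spaces: since $V_1 \oplus V_2 = V_1 \times V_2$ as manifolds, Theorem \ref{teoteo1} gives a canonical isomorphism $T(V_1 \oplus V_2) \cong TV_1 \times TV_2$, and I would check using Proposition \ref{newpro1} and Proposition \ref{newpro2} that this isomorphism is linear for the vector space structures of Definition \ref{def1}, so that $TV_1 \times TV_2$ carries precisely the direct-sum structure $TV_1 \oplus TV_2$. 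This reduces the statement to showing that $\widetilde{\Phi}(X_a)$ decomposes as $\widetilde{\Phi_1}(X_a) \oplus \widetilde{\Phi_2}(X_a)$ under this identification.

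I would carry out the verification through the group-action characterization of Proposition \ref{newpro7}. If $\rho_1, \rho_2$ are the actions of $\Phi_1, \Phi_2$, then the action of $\Phi$ is $\rho(a,(p_1,p_2)) = (\rho_1(a,p_1), \rho_2(a,p_2))$; by naturality of the tangent functor together with Theorem \ref{teoteo1}, $T\rho$ splits as $T\rho(X_a,(Y_{p_1},Y_{p_2})) = (T\rho_1(X_a,Y_{p_1}), T\rho_2(X_a,Y_{p_2}))$. Since Proposition \ref{newpro7} identifies $T\rho$ with the action of $\widetilde{\Phi}$ and each $T\rho_i$ with the action of $\widetilde{\Phi_i}$, this splitting is exactly the assertion that $\widetilde{\Phi}$ is the direct sum $\widetilde{\Phi_1} \oplus \widetilde{\Phi_2}$.

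Alternatively, a direct matrix computation via the embedding of equation \ref{jn} makes the mechanism transparent. Writing $Z(\Phi(a)) = \operatorname{diag}(a_1, a_2)$ and $TZ(B) = \operatorname{diag}(B_1, B_2)$ in block form, formula \ref{jn} exhibits the prolongation as a $2n \times 2n$ matrix whose four $n \times n$ blocks are themselves block-diagonal; a fixed permutation of the tangent coordinates, reordering the basis of Corollary \ref{newcor3} from the $V_1,V_2$-positions followed by fiber positions into alternating pairs, conjugates this matrix into $\operatorname{diag}\bigl(J_{n_1}([a_1,B_1]),\, J_{n_2}([a_2,B_2])\bigr)$. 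The main obstacle is precisely this bookkeeping: one must confirm that the coordinate permutation is independent of $a$, that it coincides with the matrix of the linear isomorphism of Theorem \ref{teoteo1}, and that it is compatible with $\breve{Z}$, so that it realizes an honest equivalence in the sense of Proposition \ref{newpro8} rather than a mere formal rearrangement. Once this is settled, the equivalence of $\widetilde{\Phi}$ with $\widetilde{\Phi_1} \oplus \widetilde{\Phi_2}$ follows.
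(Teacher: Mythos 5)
Your first argument is essentially the paper's own proof: the paper also writes $\rho_1\oplus\rho_2$ as $(\rho_1\times\rho_2)$ composed with projection maps, applies the tangent functor (chain rule plus the fact that tangents of Cartesian projections are again projections) to get $T(\rho_1\oplus\rho_2)(X,(Y_1,Y_2))=(T\rho_1(X,Y_1),T\rho_2(X,Y_2))$, and then invokes Proposition \ref{newpro7} to identify these with the prolonged representations. Your alternative block-matrix computation via $J_n$ is a reasonable supplement but is not needed once the group-action route is carried out.
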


\begin{proof} Let us consider two finite-dimensional real representations  $\Phi_1$, $\Phi_2$ together with corresponding group actions $\rho_1$,  $\rho_2$
and their direct sum $\Phi_1 \oplus \Phi_2$. Let $Pr_1$ and $Pr_2$ represent first and second Cartesian projection of $G\times (V_1 \oplus V_2)$ respectively i.e.,
 $Pr_1(a,(v_1,v_2))=a$ and $Pr_2(a,(v_1,v_2))=(v_1,v_2)$. Also let  $pr_1$ and $pr_2$ represent first and second Cartesian projections of $V_1 \oplus V_2$.
 Then we have 
\begin{eqnarray}
(\rho_1 \oplus \rho_2)(a,(v_1,v_2))&=&(\rho_1(a,v_1),\rho_2(a,v_2)), \nonumber \\
                                   &=&(\rho_1 \times \rho_2)((a,v_1),(a,v_2)), \nonumber \\
                                   &=&(\rho_1 \times \rho_2)\circ ((Pr_1, pr_1 \circ Pr_2), \nonumber \\
                                   & &(Pr_1, pr_2 \circ Pr_2))(a,(v_1,v_2))
\end{eqnarray}
for all $a\in G$ and $(v_1,v_2) \in V_1\oplus V_2$. This implies that 
\begin{equation}
(\rho_1 \oplus \rho_2)=(\rho_1 \times \rho_2)\circ ((Pr_1, pr_1 \circ Pr_2),(Pr_1, pr_2 \circ Pr_2))\label{rho11}.
\end{equation}
Using equation (\ref{rho11}) and chain rule, we have

\begin{eqnarray}
T(\rho_1 \oplus \rho_2)&=&T((\rho_1 \times \rho_2)\circ ((Pr_1, pr_1 \circ Pr_2),(Pr_1, pr_2 \circ Pr_2))), \nonumber \\
                       &=&(T(\rho_1)\times T(\rho_2)) \circ ((T(Pr_1), T(pr_1) \circ T(Pr_2)), \nonumber \\
                       & &(T(Pr_1), T(pr_2) \circ T(Pr_2))).\label{phiandrho}
\end{eqnarray}
Since the tangent functions of the first and second Cartesian product projections are also first and second
 Cartesian product projections and using (\ref{phiandrho}), we get
\begin{eqnarray}
\widetilde{(\Phi_1 \oplus \Phi_2)}(X)(Y_1,Y_2)&=&T(\rho_1 \oplus \rho_2)(X,(Y_1,Y_2)), \nonumber \\
                                              &=&(T(\rho_1)\times T(\rho_2))((X,Y_1),(X,Y_2)),\nonumber\\
                                              &=&(T(\rho_1)(X,Y_1),T(\rho_2)(X,Y_2)),\nonumber \\
                                              &=&(\widetilde{\Phi_1} \oplus \widetilde{\Phi_2})(X)(Y_1,Y_2),
\end{eqnarray}
for all $X\in TG$ and $(Y_1,Y_2)\in T(V_1)\oplus T(V_2)$. Therefore $$\widetilde{(\Phi_1 \oplus \Phi_2)}=(\widetilde{\Phi_1} \oplus \widetilde{\Phi_2}).$$ 
\end{proof}

\section{Conclusion} In this study we have prolonged finite-dimensional real  representations of Lie groups. We  have obtained faithful representations of tangent bundles of  Lie groups.
 We have shown that tangent functions of  Lie group actions correspond to prolonged representations. We have proven that if two representations are equivalent then their prolongations are also equivalent. We have shown that if $U$ is an invariant subspace for a representation, then $TU$ is an invariant subspace for prolongation of given representation and vice versa. We have proven that if $\widetilde{\Phi}$ is an irreducible representation, then $\Phi$ is also an irreducible representation. Finally we have shown that prolongations of direct sum of two representations are direct sum of prolongations of them. In future we plan to study on prolongations of representations of Lie algebras.

\end{document}